\documentclass{article}

\usepackage{amsfonts}
\usepackage{amssymb}
\usepackage{amsthm}

\usepackage{amsmath}

\usepackage{selectp}

\usepackage{xcolor}

\newtheorem{theorem}{Theorem}
\newtheorem{proposition}[theorem]{Proposition}
\newtheorem{corollary[theorem]}{Corollary}

\newtheorem{remark}[theorem]{Remark}
\newtheorem{example}[theorem]{Example}

\newcommand{\R}{\mathbb R}
\newcommand{\N}{\mathbb N}

\newcommand{\cP}{\mathcal P}
\newcommand{\cH}{\mathcal H}
\newcommand{\cD}{\mathcal D}

\newcommand{\commentout}[1]{{}}

\makeatletter
\let\@fnsymbol\@arabic
\makeatother

\begin{document}

\title{Packing and Hausdorff measures of Cantor sets associated with series}

\author{ K. Hare \thanks{Partially supported by NSERC 44597} \and F. Mendivil\thanks{Partially supported by NSERC}\and L. Zuberman\thanks{Partially supported by CONICET }}

\maketitle

\begin{center} \it \small
$^{1}$Department of Pure Math, University of Waterloo, Wateloo, Ont., Canada \\
$^{2}$Department of Mathematics and Statistics, Acadia University, Wolfville, Nova Scotia, Canada\\ 
$^{3}$Department of Mathematics, Universidad Nacional de Mar del Plata, Mar del Plata, Argentina.

\end{center}

\begin{abstract}
We study a generalization of Mor\'an's sum sets, obtaining information about the $h$-Hausdorff and $h$-packing measures of these sets and certain of their subsets.
\end{abstract}

{\bf AMS Subject Classification: 28A78, 28A80. }\\
{\bf Keywords:} Hausdorff measure, packing measure, dimension, sum set \ \\


\section{Introduction}

In  \cite{Mo89} Mor\'an introduced the notion of a sum set,
\[
C_{a}=\left\{ \sum_{i=1}^{\infty }\varepsilon _{i}a_{i}:\varepsilon
_{i}=0,1\right\} ,
\]
the set of all possible subsums of the series $\sum a_{n}$ where $a=(a_{n})$
is a sequence of vectors in $\mathbb{R}^{p}$ with summable norms. The
classical Cantor middle-third set is one example with $a_i=3^{-i}2$.  Assuming a suitable
separation condition, in  \cite{Mo94} Mor\'an related the $h$-Hausdorff measure of $C_{a}$ to
the quantities $R_{n}=\sum_{i>n}\left\Vert a_{i}\right\Vert $.

In this paper, we generalize Mor\'an's sum set notion to permit a greater diversity in the geometry. (See (\ref{eq:gen}) for the
definition of the generalization.) For example, our
generalization includes Cantor-like sets in $\mathbb{R}$ which have the
property that the Cantor intervals of a given level (but not necessarily the gaps) are all of the same
length. Moreover, unlike Mor\'an's sets, our generalized sum sets can
have Hausdorff dimension greater than one.

We obtain the analogue of Mor\'an's results on $h$-Hausdorff measures for these generalized sum sets and prove dual results for $h$-packing measures.
We show that for any of these sum sets there is a doubling dimension function $h$ for which the sum set has both finite and positive 
$h$-Hausdorff and $h$-packing measure. 
We give formulas for the Hausdorff and packing dimensions, and show that given any $\alpha$ less than the Hausdorff dimension (or $\beta$ less than the packing dimension) there is a sum subset that has Hausdorff dimension $\alpha$ (or packing dimension $\beta$). 
In fact, there is even a sum subset with both Hausdorff dimension $\alpha$ and packing dimension $\beta$ provided $\alpha/\beta$ is dominated by the ratio of the Hausdorff dimension to the packing dimension of the original set.
Furthermore, if the Hausdorff and/or packing measure is finite and positive (in the corresponding dimension), then we can choose this sum subset to have finite and positive Hausdorff and/or packing measure.

\section{Preliminaries}

Let $s_n > 0$ with $\sum_n s_n < \infty$.
Fix $N \in \N$ and for each $n \in \N$ let  the \emph{$n$th digit set} $\cD^n = \{ 0 = d^n_1, d^n_2, \ldots, d^n_N \} \subset \R^p$ be given.
We define $C_{s,\cD}$ by
\begin{equation}
 \label{eq:gen}
    C_{s,\cD} = \{ \sum_{i=1}^{\infty} s_i b_i : b_i \in \cD^i \},
\end{equation}
the set of all possible sums with choices drawn from $\cD^n$ and scaled by $s_n$. 
Mor\'an's sum set is the special case when $s_i = ||a_i||$, $N=2$ and $ \cD^i = \{0,a_i/||a_i||\}$. 
This generalized sum set is the main object of study in this paper.

For each $n$ define
\[
    \kappa_n = \max \{ \| d^n_i - d^n_j \| : 0 \le i, j \le N, i \ne j \}
\]
and
\[
  \tau_n = \min \{ \| d^n_i - d^n_j \| : 0 \le i, j \le N, i \ne j \}.
\]
In Mor\'an's case, $\kappa_n = \tau_n =1$.
We assume that $\kappa := \sup_n \kappa_n < \infty$, as well as $\tau := \inf \tau_n > 0$;
the intent is that the sequence $s_n$ controls the decay rate, not the (possibly varying) geometry of the digit sets $\cD^n$.
In addition, we assume the rapid decay condition
\begin{equation} \label{eq:kappacondition}
    \sup_n \frac{\kappa R_n}{\tau s_n} = M < 1,
\end{equation}
where $R_n = \sum_{i > n} s_i$.  This is the analogue of Mor\'an's separation condition. The quantity $R_n$ is very important for describing the geometry of $C_{s,\cD}$. 

In certain situations where we have precise information about the geometry of $\cD^n$, it is possible to assume something weaker
than (\ref{eq:kappacondition}) and still have a suitable separation property to allow for dimensions to be calculated; see Example \ref{example:alldimensions}.

\begin{example}\label{example:Cantor}
\begin{enumerate}
\item  A very simple example is the classical Cantor set with $s_n=2\cdot 3^{-n}$ and $\cD=\{0,1\}$. 
\item Consider a finite set $\cD\subset \R^p$, a real number $r<  d/(2D)$ (where $d=\min \tilde \cD$, $D=\max\tilde \cD$ and $\tilde \cD =\{\|d-d'\|:d,d'\in\cD,d\neq d'\}$), a matrix $O\in\R^{p\times p}$ orthogonal and the contractions $S_d(x)=rO(x+d)$. The attractor of this IFS is $C_{s,D}$ with $s_n=r^n$ and $\cD^n=O^n\cD$. 
\end {enumerate}
\end{example}

We now examine some basic properties of $C_{s,\cD}$.
First we argue that $C_{s,\cD}$ is a compact and perfect set.
To do this, let $\Xi = \{1,2,\ldots, N \}^\N$ with the product topology induced by the discrete topology on each factor.
 Further, for $n \in \N$ let $\Xi^n = \{ 1,\ldots, N\}^n$.
We note that $\Xi$ is a totally disconnected, perfect metric space.  Define the function $\Phi:\Xi \to \R^p$ by
\[
     \Phi(\sigma) = \sum_i s_i d^i_{\sigma_i}.
\]
Then the range of $\Phi$ is $C_{s,\cD}$.
Since $\Xi$ is compact and perfect, we need only show that $\Phi$ is continuous and injective to show that $C_{s,\cD}$ is compact and perfect.
Let $\Phi_n:\Xi \to \R^p$ be defined by $\Phi_n(\sigma) = \sum_{i \le n} s_i d^i_{\sigma_i}$.
Then $\Phi_n$ is constant on each of the sets $\Xi_\alpha = \{ \sigma \in \Xi :  \sigma_i = \alpha_i, 1 \le i \le n \}$ for any fixed $\alpha \in \Xi^n$.
This means that each $\Phi_n$ is continuous.
Furthermore, $\|\Phi_n(\sigma) - \Phi(\sigma)\|\le \kappa R_n$ and thus
$\Phi_n \to \Phi$ uniformly on $\Xi$ and so $\Phi$ is also continuous.
 Thus $C_{s,\cD}$ is compact.

 If $n$ is the first place where $\sigma$ and $\sigma'$ disagree,
\begin{eqnarray}
     \|\Phi(\sigma) - \Phi(\sigma')\| &=& \| \sum_i s_i ( d^i_{\sigma(i)} - d^i_{\sigma'(i)} )\| \cr
                                                      &\ge& \|s_n ( d^n_{\sigma(n)} - d^n_{\sigma'(n)} )\| - \| \sum_{i > n} s_i (d^i_{\sigma(i)} - d^i_{\sigma'(i)}) \| \cr
                                                      &\ge& s_n \tau - \kappa R_n > 0. \label{eq:separation}
\end{eqnarray}
This means that $\Phi$ is injective and is thus a homeomorphism, so that $C_{s,\cD}$ is also totally disconnected and perfect.

For a given $n \in \N$ and $\sigma \in \Xi^n$, we define
\[
    x_\sigma = \sum_{i \le n}  s_i d^i_{\sigma_i}
\]
and
\[
     C_{\sigma,n} = x_\sigma + \{  \sum_{i>n} s_i b_i : b_i \in \cD^i \}.
\]
Our condition (\ref{eq:kappacondition}) ensures the non-overlapping of the sets $C_{\sigma, n}$.

Using this notation, we see two very important facts.  First,   $C_{\sigma,n} = x_\alpha - x_\sigma + C_{\alpha,n}$ for any $\sigma,\alpha \in \Xi^n$.
That is, for a fixed $n$ the collection of $C_{\sigma,n}$ are all translates of each other.
Secondly, we can decompose $C_{s,\cD}$ into $N^n$ copies of $C_{\sigma,n}$ as
\[
    C_{s,\cD} = \bigcup_{\sigma \in \Xi^n} C_{\sigma,n}   = \{ x_\sigma : \sigma \in \Xi^n \} + C_{1,n},
\]
where by $1 \in \Xi^n$ we mean the element all of whose terms equal to $1$.

  An elementary estimate gives that
\begin{equation} \label{eq:diameter}
     |C_{\sigma,n}|  \le \| \sum_{i > n} s_i b_i - \sum_{i > n} s_i b_i' \| \le \kappa \sum_{i > n} s_i  = \kappa R_n
\end{equation}
where $| C|$ means the diameter of the set $C$.

\section{Hausdorff and packing measures}

\label{sec:hausdorff}

We first recall some facts about Hausdorff and packing measures (see \cite{Ro98, Mat95}).  For us, a \emph{dimension function} is a continuous non-decreasing function
$h:[0,\infty) \to [0,\infty)$ with $h(0) = 0$.  It is said to be \emph{doubling} if there is some constant $c > 0$ so that $h(2 x) \le c\, h(x)$ for
all $x>0$.

For two dimension  function $f,g$ we say that $f \prec g$ if
\[
   \lim_{t \to 0+} g(t)/f(t) = 0.
\]

For each $\delta > 0$, a $\delta$-covering of a set $E$ is a countable collection $\{ B_i \}$ of subsets of $\R^p$ with diameters dominated by $\delta$, that is
$|B_i| \le \delta$, and for which $E  \subseteq \cup_i B_i$.  We define
\[
    \cH^h_\delta(E) = \inf \{ \sum_i h(|B_i|) : \{ B_i \} \mbox{ is a $\delta$-covering of $E$} \}
\]
and the \emph{Hausdorff $h$-measure} as
\[
    \cH^h(E) = \lim_{\delta \to 0} \cH^h_\delta(E).
\]
Notice that in the definition of $\cH^h_\delta$  it is sufficient to consider coverings by balls.

Now we turn to the $h$-packing measure $\cP^h$.
A $\delta$-packing of a set $E$ is a disjoint family of open balls $\{ B(x_i, r_i) \}$ with $x_i \in E$ and $r_i \le \delta$.
The $h$-packing pre-measure is given by
\[
    \cP^h_0(E) = \lim_{\delta \to 0} \cP^h_\delta(E)
\]
where
\[
     \cP^h_\delta = \sup \{ \sum_i h(|B_i|) : \{ B_i \} \mbox{ is a $\delta$-packing of $E$} \}.
\]
Unfortunately $\cP^h_0$ is not a measure as it is in general not countably additive.  Thus we need one more step to construct the packing
measure $\cP^h$,
\[
  \cP^h(E) = \inf \{ \sum_i \cP^h_0(E_i) :  E \subset \bigcup_i E_i \}.
\]

The next Theorem gives estimates for the Hausdorff and packing measures of $C_{s,\cD}$.
The first two claims about the Hausdorff measure of $C_{s,\cD}$ are given in \cite{Mo94} for the special case of $\cD^n$ containing two digits.

\begin{theorem}  \label{thm:measureestimates}
Suppose that $h$ is a doubling dimension function.
\begin{enumerate}
    \item If $\liminf  N^n h(\kappa R_n) = \alpha$ then $\cH^h(C_{s,\cD}) \le \alpha$.

    \item If $\liminf  N^n h(\kappa R_n) = \alpha > 0$ then $\cH^h(C_{s,\cD}) > 0$.

    \item If $\limsup  N^n h(\kappa R_n) = \alpha < \infty$ then $\cP^h(C_{s,\cD}) \le N \alpha$.

    \item If $\limsup  N^n h(\kappa R_n)  = \alpha  > 0$ then $\cP^h(C_{s,\cD}) > 0$.

\end{enumerate}
\end{theorem}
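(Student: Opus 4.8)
The plan is to establish (1) by an explicit cover and (2)--(4) through the canonical probability measure on $C_{s,\cD}$ together with Frostman-type and packing-density comparison theorems.

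\emph{Part (1).} The decomposition $C_{s,\cD}=\bigcup_{\sigma\in\Xi^{n}}C_{\sigma,n}$ into $N^{n}$ sets of diameter $\le\kappa R_{n}$ (by (\ref{eq:diameter})) is a $\delta$-cover as soon as $\kappa R_{n}\le\delta$, so $\cH^{h}_{\delta}(C_{s,\cD})\le N^{n}h(\kappa R_{n})$ for such $n$. Choosing $n=n_{k}\to\infty$ along a sequence realising the $\liminf$ and letting $k\to\infty$, then $\delta\to0$, gives $\cH^{h}(C_{s,\cD})\le\alpha$; only monotonicity of $h$ is used.

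\emph{The measure and the key geometric estimate.} For (2)--(4) let $\mu$ be the image under $\Phi$ of the i.i.d.\ uniform product measure on $\Xi$, so $\mu$ is a Borel probability measure carried by $C_{s,\cD}$ with $\mu(C_{\sigma,n})=N^{-n}$ for $\sigma\in\Xi^{n}$. The separation condition (\ref{eq:kappacondition}) is what makes $\mu$ ``$h$-regular'': it forces $s_{n+1}<R_{n}<s_{n}$, so (via (\ref{eq:separation})) distinct cylinders of level $n$ are pairwise at distance $\ge\tau s_{n}(1-M)$, while each such cylinder has diameter $\le\kappa R_{n}\asymp s_{n}$. A volume/packing count then produces a constant $K=K(p,\kappa,\tau,M)$ such that, for every $x\in C_{s,\cD}$ and every small $r>0$, if $n=n(r)$ is the least integer with $\kappa R_{n}\le r$ then
\[
   N^{-n}\ \le\ \mu\big(B(x,r)\big)\ \le\ K\,N^{-n};
\]
the left inequality holds because the level-$n$ cylinder through $x$ has diameter $\le r$, the right one bounds the number of level-$n$ cylinders meeting $B(x,r)\subset B(x,2r)$. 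Since $\kappa R_{n}\le r<\kappa R_{n-1}$, as $r\to0$ the quantity $h(2r)$ ranges (up to the doubling constant of $h$) between $h(\kappa R_{n})$ and $h(\kappa R_{n-1})$, while $\mu(B(x,r))$ sits between $N^{-n}$ and $KN^{-n}$; this is the two-sided density information that drives the remaining three parts.

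\emph{Parts (2) and (4).} For (2): the estimate gives $\overline{D}^{h}_{\mu}(x):=\limsup_{r\to0}\mu(B(x,r))/h(2r)\le K/\liminf_{n}N^{n}h(\kappa R_{n})=K/\alpha<\infty$ for every $x$, so the mass distribution principle yields $\cH^{h}(C_{s,\cD})\ge\alpha/(cK)>0$, $c$ the doubling constant. For (4): one shows $\underline{D}^{h}_{\mu}(x):=\liminf_{r\to0}\mu(B(x,r))/h(2r)\le K/\alpha<\infty$ for every $x$ (taking $r\approx\kappa R_{n}$ along a subsequence on which $N^{n}h(\kappa R_{n})$ stays near $\alpha$), and invokes the packing density theorem — if $\underline{D}^{h}_{\mu}\le\lambda$ on $E$ then $\cP^{h}(E)\ge\mu(E)/\lambda$, proved by extracting a packing via the Vitali covering theorem for the Radon measure $\mu$ — to conclude $\cP^{h}(C_{s,\cD})>0$.

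\emph{Part (3) and the main obstacle.} Here one needs a lower bound $\underline{D}^{h}_{\mu}(x)\ge\lambda$ for every $x\in C_{s,\cD}$, together with the reverse packing density theorem ($\underline{D}^{h}_{\mu}\ge\lambda$ on $E$ implies $\cP^{h}(E)\le\mu(E)/\lambda$, proved by stratifying $E$ by the scale below which the density bound holds and using that a packing is a disjoint ball family of total $\mu$-mass $\le\mu(\R^{p})$). Combining $\mu(B(x,r))\ge N^{-n}$ with $h(2r)\le h(2\kappa R_{n-1})$ already gives $\underline{D}^{h}_{\mu}\gtrsim 1/(N\alpha)$ and hence $\cP^{h}(C_{s,\cD})\lesssim N\alpha$; \textbf{the hard part is getting the constant exactly $N$.} This requires (i) not spending the doubling constant of $h$ — using instead the elementary inequality $2\kappa R_{n-1}\le\kappa R_{n-2}$, which follows from $R_{n-1}<s_{n-1}$ — and (ii) the sharp lower bound for $\mu(B(x,r))$: when $r$ is close to $\kappa R_{n-1}$ the ball $B(x,r)$, even if it fails to contain the whole parent cylinder of level $n-1$, contains all but at most one of its $N$ children, so that $\mu(B(x,r))\ge(1-1/N)N^{-(n-1)}$ there. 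Alternatively, (3) can be done combinatorially: decompose $C_{s,\cD}$ into its $N^{n}$ level-$n$ cylinders, use countable subadditivity of $\cP^{h}$ to reduce to bounding $\cP^{h}_{0}$ of a single cylinder, and to each ball $B_{i}$ of an arbitrary packing of it assign the largest cylinder $Q_{i}\subset B_{i}$ it engulfs; disjointness of the $B_{i}$ forces $\sum_{i}\mu(Q_{i})\le N^{-n}$, which (unlike a naive one-cylinder-per-ball charge) makes the sum over scales converge and, with the right choice of $Q_{i}$, carries the factor $N$. I expect all residual difficulty to lie in this constant bookkeeping; given the two-sided estimate on $\mu(B(x,r))$, parts (1), (2), (4) and the qualitative form of (3) are routine.
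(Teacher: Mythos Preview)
Your approach is essentially the paper's: the natural covering for (1), and the uniform probability measure $\mu$ combined with density/mass-distribution arguments for (2)--(4). The only point worth commenting on is part (3), where you manufacture difficulty that is not there.

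The paper's argument for (3) is a one-line density bound. With $\kappa R_{n}<\delta\le\kappa R_{n-1}$ and $x\in C_{\sigma,n}$, the cylinder $C_{\sigma,n}$ has diameter $\le\kappa R_{n}<\delta$, so $C_{\sigma,n}\subset B(x,\delta)$ and
\[
   \mu\bigl(B(x,\delta)\bigr)\ \ge\ N^{-n}\ =\ \frac{N^{-(n-1)}}{N}\ >\ \frac{h(\kappa R_{n-1})}{N\beta}\ \ge\ \frac{h(\delta)}{N\beta},
\]
the last step using only that $h$ is non-decreasing and $\delta\le\kappa R_{n-1}$. No doubling constant appears; the factor $N$ is simply the drop from level $n-1$ to level $n$. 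Cutler's density theorem then gives $\cP^{h}(C_{s,\cD})\le N\alpha$ directly. Your perceived obstacle arises from normalising the density by $h(2r)$ rather than $h(r)$; neither the inequality $2\kappa R_{n-1}\le\kappa R_{n-2}$, nor the sharper mass bound $(1-1/N)N^{-(n-1)}$, nor the combinatorial cylinder-charging scheme is needed.
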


\begin{remark}
If $h$ is doubling and $0 < \liminf N^n h( R_n) < \infty$, then $0< \cH^h(C_{s,\cD}) < \infty$ and so $C_{s,\cD}$ is an $h$-Hausdorff set.
Similarly, if $ \limsup N^n h(R_n)$ is positive and finite, then $C_{s,\cD}$ is
an $h$-packing set. Finally, if $\liminf N^n h(R_n)$ is positive and $\limsup N^n h(R_n)$ is finite, then $C_{s,\cD}$ is both an $h$-Hausdorff and an $h$-packing set.
 \end{remark}

\begin{proof}
Item 1) is trivial by considering the covering $C_{I,n}$ for all $I \in \{0,1,\ldots, N \}^n$, which consists of $N^n$ sets all of diameter at most $\kappa R_n$.

To prove the rest of the statements, we will use the fact that there is a Borel measure $\mu$ supported on $C_{s,\cD}$ for which
$\mu(C_{\sigma,n}) = N^{-n}$ for each $\sigma$ and $n$.  This measure is often called the \emph{natural probability measure}.

\medskip

\noindent\emph{2): \quad}  Let $\beta < \alpha$ so that we have $N^n h(\kappa R_n) > \beta$ for all large $n$.  Now
choose $x \in C_{s,\cD}$ and $\delta > 0$ and let $n$ be such that $\kappa R_n < \delta \le \kappa R_{n-1}$.
By a simple modification of Lemma 2 in \cite{Mo89} there is a $q \in \N$ so that the number of $C_{\sigma,n}$ which intersect
$B(x,\delta)$ is less than $q$ (independent of $B$ and $\delta$). (This is where the condition  (\ref{eq:kappacondition}) is used.)  But then we have
\[
   \mu(B(x,\delta)) \le q \mu(C_{\sigma,n})  = q\, N^{-n} < \frac{q}{\beta} h(\kappa R_n) < \frac{q h(\delta)}{\beta}.
\]
By the mass distribution principle (see \cite{FalFG}), we have $\cH^h(C_{s,\cD}) \ge \alpha/q$.

\medskip

\noindent\emph{3) \quad}  Let $\beta > \alpha$ so that we have $N^n h( \kappa R_n) < \beta$ for all large $n$.
Now choose $x \in C_{s,\cD}$ and $\delta > 0$ and let $n$ be such that $\kappa R_n < \delta \le \kappa R_{n-1}$.
We know that $x \in C_{\sigma,n}$ for some $\sigma$ and, since $|C_{\sigma,n}| \le \kappa R_n < \delta$, we have that $C_{\sigma,n} \subseteq B(x,\kappa R_n) \subseteq B(x,\delta)$.
But then
\[
   \mu(B(x,\delta)) \ge \mu(C_{\sigma,n}) = N^{-n} = \frac{N^{-(n-1)}}{N} >  \frac{h( \kappa R_{n-1})}{N \beta}  \ge  \frac{h(\delta)}{N  \beta},
\]
since $h$ is a nondecreasing function.   But then  we have that $$\liminf \mu(B(x,\delta))/h(\delta) \ge ( N \alpha)^{-1}$$ and so  $\cP^h(C_{s,\cD}) \le N \alpha$
by Theorem 3.16 in \cite{C95}.\\

\medskip

\noindent\emph{4) \quad} Let $ 0 < \beta < \alpha$.
Then there are $n_j$ so that $N^{n_j} h(\kappa R_{n_j}) > \beta$ for all $j$. Let
$x \in C_{s,\cD}$ be given.  For any $j$ we have $x \in C_{\sigma_j,n_j}$ for some $\sigma_j$.
By the same simple modification of Lemma 2 in \cite{Mo89}, there is a $q \in \N$ so that for any $\delta > 0$ and any ball $B$ of radius $\delta$, if $m\in \N$ is the smallest value with $\kappa R_m < \delta$ then the number of  $C_{I,m}$ which intersect $B$ is less than $q$ (independent of $B$ and $\delta$).
Let $\delta =  \kappa R_{n_j-1}$, so $\kappa R_{n_j} < \delta = \kappa R_{n_j-1}$.
Then
\[
   \mu(B(x, \kappa R_{n_j}))  \le \mu(B(x,\delta)) \le q\,\mu(C_{\sigma_j,n_j}) = q\, N^{-n_j} <  q\, h(\kappa R_{n_j})/\beta
\]
and thus $\liminf \mu(B(x,\delta))/h(\delta) \le q/\alpha$. By Theorem 3.16 in \cite{C95}, it follows that $\cP^h(C_{s,\cD}) \ge c\alpha/q$, where $c$ is the doubling constant for $h$.
\end{proof}

\begin{remark} \label{rmk:s_n}
Since $\kappa s_{n+1} < \kappa R_n \le M \tau s_n < \kappa s_n$, for any doubling dimension function $h$,
we could instead relate the two quantities, $\liminf N^n h( s_n)$ and $\limsup N^n h( s_n)$, to the $h$-Hausdorff and $h$-packing measure of $C_{s,\cD}$.
\end{remark}

\begin{theorem}  For any sequence $s_n$ and collections of digits $\cD^n$ which satisfy (\ref{eq:kappacondition}), there is a doubling dimension function $h$ for which
$C_{s,\cD}$ is simultaneously both an $h$-Hausdorff set and an $h$-packing set.
\end{theorem}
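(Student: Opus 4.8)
The plan is to reduce the theorem to the construction of a single doubling dimension function $h$ with $N^n h(\kappa R_n)$ bounded between two positive constants, and then to quote Theorem \ref{thm:measureestimates}. Concretely, if I can arrange $N^n h(\kappa R_n) = 1$ for every $n$, then $\liminf N^n h(\kappa R_n) = \limsup N^n h(\kappa R_n) = 1 \in (0,\infty)$, so items (1) and (2) of that theorem give $0 < \cH^h(C_{s,\cD}) \le 1$ while items (3) and (4) give $0 < \cP^h(C_{s,\cD}) \le N$; this is exactly the claim that $C_{s,\cD}$ is simultaneously an $h$-Hausdorff set and an $h$-packing set.

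First I would record the only geometric consequence of (\ref{eq:kappacondition}) that the construction uses. Since $\tau \le \kappa$ and $M<1$, the condition applied at index $n+1$ gives $R_{n+1} \le M\tau s_{n+1}/\kappa \le M s_{n+1} < s_{n+1}$, hence
\[
    R_n = s_{n+1} + R_{n+1} > 2R_{n+1}.
\]
Thus, writing $t_n := \kappa R_n$, the sequence $(t_n)_{n \ge 0}$ is strictly decreasing to $0$ and satisfies $2t_{n+1} \le t_n$ (equivalently $2t_n \le t_{n-1}$) for all $n$.

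Next I would define $h$ explicitly: set $h(t_n) = N^{-n}$ for each $n \ge 0$, extend $h$ by linear interpolation on each interval $[t_{n+1}, t_n]$, put $h(t) = t/t_0$ for $t \ge t_0$, and $h(0) = 0$. Then $h$ is non-decreasing; it is continuous on $(0,\infty)$ since it is piecewise linear, and continuous at $0$ because $h(t) \le h(t_n) = N^{-n} \to 0$ as $t \to 0^+$. So $h$ is a dimension function with $N^n h(t_n) = 1$ for all $n$, and it remains only to check that $h$ is doubling.

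The doubling verification is the one place where $2t_{n+1}\le t_n$ is used, and it is the only real work. Let $x \in (0,\infty)$. If $x \ge t_0$, then $h(2x)/h(x) = 2$. If $x \in [t_{n+1},\, t_n/2]$, then $2x \in [2t_{n+1}, t_n] \subseteq [t_{n+1}, t_n]$, so monotonicity gives $h(2x) \le h(t_n) = N \cdot N^{-(n+1)} = N\,h(t_{n+1}) \le N\,h(x)$. If $x \in [t_n/2,\, t_n]$, then $t_n/2 \ge t_{n+1}$ gives $h(x) \ge h(t_{n+1}) = N^{-(n+1)}$, while $2x \le 2t_n \le t_{n-1}$ gives $h(2x) \le h(t_{n-1}) = N^{-(n-1)} = N^2 N^{-(n+1)}$, so $h(2x) \le N^2 h(x)$ (for $n=0$ this last case is $x \in [t_0/2, t_0]$, $2x \in [t_0, 2t_0]$, whence $h(2x) \le 2 \le 2N\,h(x)$). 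Hence $h$ is doubling with constant $N^2$, which completes the argument. I expect this doubling check to be the main obstacle: it is delicate precisely because the ratios $R_n/R_{n+1}$ need not be bounded, and linear interpolation succeeds only because doubling $x$ within a single interval $[t_{n+1}, t_n]$ can never push $h$ past $h(t_n)$, a factor $N$ larger, no matter how long that interval is.
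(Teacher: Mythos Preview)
Your proposal is correct and follows the same overall strategy as the paper: construct a dimension function $h$ with $N^n h(\kappa R_n)=1$ for all $n$, verify it is doubling, and then invoke Theorem~\ref{thm:measureestimates}. The execution differs in two minor but pleasant ways. The paper interpolates by setting $h(x)=1/f^{-1}(x)$ where $f$ is piecewise linear with $f(N^n)=\kappa R_n$ (so $h$ is the reciprocal of a linear function on each $[\kappa R_{n+1},\kappa R_n]$), and proves doubling by iterating the estimate $\kappa R_i \le \theta\,\kappa R_{i-1}$ with $\theta=\tau M/\kappa$ to bound how many levels $2x$ can jump, obtaining the doubling constant $N^{2-\ln 2/\ln\theta}$. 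You instead interpolate linearly and extract from (\ref{eq:kappacondition}) the sharper single-step bound $R_{n+1}<s_{n+1}$, hence $R_n>2R_{n+1}$, which forces $2x$ to jump at most one level and yields the cleaner doubling constant $N^2$. Both routes are short; yours is slightly more elementary and gives a constant independent of $\theta$.
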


\begin{proof}

Following  the pattern in \cite[Section 5]{CMMS04}, we define the function $h:[0,\kappa R_0] \to \R$ by $h(0) = 0$ and $h(x) = 1/f^{-1}(x)$ where $f(x)$ is given by
\[
     f(x) = \kappa R_n + \frac{\kappa R_{n+1} - \kappa R_n}{N^{n+1} - N^n} ( x - N^n), \quad   x \in [N^n, N^{n+1}).
\]
Clearly $h$ is non-decreasing and continuous, so we only need to show that $h$ is doubling.
For $x > 0$, let $n,m \in \N$ be such that
$\kappa R_{m+1} < x \le \kappa R_{m} < \kappa R_{n} \le 2 x < \kappa R_{n-1}$.
Then $\kappa R_i \le \tau M s_i \le \kappa \frac{\tau M}{\kappa} R_{i-1}$ for all $i$.
Letting $\theta = \tau M/\kappa < 1$,
\[
      \theta^{n-m} \le \frac{ \kappa R_n}{\kappa R_m} < \frac{2 x}{x} = 2
\]
and so we have $m - n \le  -\ln(2)/\ln(\theta)$.  As $f(N^j) = \kappa R_j$,
\[
     \frac{h(2x)}{h(x)} = \frac{f^{-1}(x)}{f^{-1}(2x)} \le \frac{N^{m+1}}{N^{n-1}} \le N^{2 -\ln(2)/\ln(\theta)},
\]
and so $h$ is doubling.

Since $N^n h(\kappa R_n) = 1$ for all $n$, we have $C_{s,\cD}$ is an $h$-Hausdorff set and an $h$-packing set for this dimension function $h$, as desired.
\end{proof}

The next theorem is a simple consequence of some known results.
However, it shows that the set of dimensional subsets of $C_{s,\cD}$ is an initial segment in the partially ordered set of all doubling dimension functions.

\begin{theorem} \label{thm:comparingHausdorff}
Let $f, h$ be doubling dimension functions and assume $f \prec h$.
\begin{enumerate}
  \item \label{comparingHausdorff}If $0 < \cH^h(C_{s,\cD}) < \infty$, then for any $t > 0$ there is a compact and perfect subset $E \subset C_{s,\cD}$ so that
$\cH^f(E) = t$.
  \item \label{comparingpacking} If $0 < \cP^h(C_{s,\cD}) < \infty$, then for any $t > 0$ there is a compact and perfect subset $E \subset C_{s,\cD}$ so that
$\cP^f(E) = t$.
 \end{enumerate}
\end{theorem}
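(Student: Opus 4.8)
The plan is to derive both parts from the classical subset theorems for Hausdorff and packing measures; the only fact about $C_{s,\cD}$ that is needed is that it is compact (established in Section~\ref{sec:hausdorff}).

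First I would record an elementary comparison lemma: if $f \prec h$ and $A \subseteq \R^p$, then $\cH^h(A) > 0$ forces $\cH^f(A) = \infty$, and likewise $\cP^h(A) > 0$ forces $\cP^f(A) = \infty$. Indeed, given $\varepsilon > 0$, choose $\delta_0 > 0$ with $h(x) \le \varepsilon f(x)$ for $0 < x \le \delta_0$; then for every $\delta \le \delta_0$ each $\delta$-cover (resp.\ $\delta$-packing) $\{B_i\}$ satisfies $\sum_i h(|B_i|) \le \varepsilon \sum_i f(|B_i|)$, so $\cH^h_\delta(A) \le \varepsilon\, \cH^f_\delta(A)$ (resp.\ $\cP^h_\delta(A) \le \varepsilon\, \cP^f_\delta(A)$). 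Letting $\delta \to 0$, pushing the packing inequality through the final infimum over countable decompositions, and then letting $\varepsilon \to 0$, we conclude $\cH^f(A) = \infty$ whenever $\cH^h(A) > 0$, and similarly for $\cP$. Applying this to $A = C_{s,\cD}$ under the hypotheses of \ref{comparingHausdorff} and \ref{comparingpacking} gives $\cH^f(C_{s,\cD}) = \infty$ and $\cP^f(C_{s,\cD}) = \infty$ respectively. (Note that only the positivity half of each hypothesis is used here.)

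Next I would invoke the subset theorems. Since $C_{s,\cD}$ is compact, it is Souslin. For \ref{comparingHausdorff}, the Besicovitch--Davies subset theorem for Hausdorff measures with respect to a general (doubling) dimension function (see \cite{Ro98, Mat95}), together with its standard refinement prescribing the value of the measure, yields for each $t \in (0,\infty)$ a compact $K \subseteq C_{s,\cD}$ with $\cH^f(K) = t$. For \ref{comparingpacking}, the packing-measure analogue (the theorem of Joyce and Preiss) provides a compact subset of $C_{s,\cD}$ of finite positive $\cP^f$-measure; combining this with $\cP^f(C_{s,\cD}) = \infty$ to assemble finitely many disjoint such compacta inside distinct cylinders $C_{\sigma,n}$ whose $\cP^f$-measures sum past $t$, and then trimming by the usual continuous ``sweep'' argument (only countably many hyperplanes can carry positive $\cP^f$-measure), produces a compact $K \subseteq C_{s,\cD}$ with $\cP^f(K) = t$.

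Finally I would pass to the perfect kernel. By the Cantor--Bendixson theorem, $K = P \cup S$ with $P$ compact and perfect and $S$ countable. Since $f$ is continuous with $f(0) = 0$, every singleton, hence every countable set, has zero $\cH^f$- and zero $\cP^f$-measure, so countable subadditivity gives $\cH^f(P) = \cH^f(K) = t$ in case \ref{comparingHausdorff} (resp.\ $\cP^f(P) = \cP^f(K) = t$ in case \ref{comparingpacking}); in particular $P$ is nonempty since $t > 0$. Taking $E = P$ completes the proof. The genuinely deep inputs are the two subset theorems — the packing one being much harder, since the packing analogue of Besicovitch's theorem was a well-known open problem until Joyce and Preiss — while in the write-up itself the only points needing any care are the comparison lemma and the passage from ``a finite positive value'' to ``exactly $t$,'' both of which are routine.
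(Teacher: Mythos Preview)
Your proof is correct and follows essentially the same route as the paper's: establish $\cH^f(C_{s,\cD})=\infty$ (resp.\ $\cP^f(C_{s,\cD})=\infty$) from $f\prec h$, invoke the classical subset theorems (Larman/Besicovitch--Davies for Hausdorff, Joyce--Preiss for packing), and finish with Cantor--Bendixson. The paper cites Rogers' Theorem~40 for the comparison step where you give the direct $\varepsilon$-argument; both are equivalent.

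The one place the two write-ups genuinely diverge is in reaching the \emph{exact} value $t$ in the packing case. The paper first strengthens Joyce--Preiss (via their Lemma~6) to produce a compact subset of finite $\cP^f$-measure exceeding $t$, and then invokes Lyapunov's convexity theorem to hit $t$ exactly. You instead propose assembling disjoint compacta in distinct cylinders and then performing a hyperplane sweep. Your sweep has the virtue of yielding a \emph{closed} set directly, whereas Lyapunov a priori only gives a measurable one; on the other hand, your ``assemble finitely many disjoint such compacta \dots\ whose $\cP^f$-measures sum past $t$'' is the step that needs care. Applying Joyce--Preiss separately to each cylinder gives no uniform lower bound on the pieces, and translating a single Joyce--Preiss subset to all $N^n$ cylinders only guarantees total measure at least the original $c_0$, not more. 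The clean fix is exactly what the paper does: cite the strengthened Joyce--Preiss theorem that already delivers a compact subset with finite $\cP^f$-measure greater than $t$, after which either your sweep or Lyapunov finishes the job.
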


\begin{proof}
\ref{comparingHausdorff}.
From Theorem 40 in \cite{Ro98}, we have that $\cH^f(C_{s,\cD}) = \infty$.  Then by Theorem 2 in \cite{La67} there is some closed subset
$E' \subset C_{s,\cD}$ for which $\cH^f(E') = t$.  As $E'$ is a closed subset of a perfect set, it is the union of a perfect set $E$ and
a countable set, so $\cH^f(E) = \cH^f(E') = t$ and $E$ is a perfect subset of $C_{s,\cD}$.

\ref{comparingpacking}.
By the same argument as Theorem 40 in \cite{Ro98}, but adapted to packing measures, we have that $\cP^f(C_{s,\cD}) = \infty$.
Now, if we obtain a closed subset
$E' \subset C_{s,\cD}$ for which $\cP^f(E')  = t$, then we find a perfect subset  in a similar way to the case \ref{comparingHausdorff} before.
In \cite{JP95}, Joyce and Preiss proved that if a set has infinite $h$-packing measure (for any given  $h \in \cD$), then the set contains a compact subset with finite $h$-packing measure.
With a simple modification of their proof, (in particular their Lemma 6), we obtain a set of finite packing measure greater than $t$.
By Lyapunov's convexity theorem, there is a subset whose $h$-packing measure is exactly $t$ \cite[Theorem 5.5]{Ru91}.
\end{proof}

We now specialize to the ``usual'' dimension functions $h_s(x) = x^s$ and let $\dim_H$ and $\dim_P$ denote the ``usual'' Hausdorff and packing dimension.
In analogy with the case of a ``cut-out'' Cantor subset of $\R$ (see \cite{BT54,CMMS04,GMS07}, we have the following Proposition.

\begin{proposition} \label{dimensions}
We have that
\[
    \dim_H(C_{s,\cD}) = \liminf \frac{- n \ln(N)}{\ln(s_n)} \quad \mbox{ and } \quad
    \dim_P(C_{s,\cD}) = \limsup \frac{-n \ln(N)}{\ln(s_n)}.
\]
\end{proposition}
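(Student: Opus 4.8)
The plan is to deduce the dimension formulas directly from Theorem~\ref{thm:measureestimates}, applied to the family of power dimension functions $h_s(x) = x^s$, combined with Remark~\ref{rmk:s_n}, which lets me replace $\kappa R_n$ by $s_n$. Observe first that each $h_s$ is doubling (with constant $2^s$), so the theorem applies. For a fixed $s > 0$, consider the quantities $N^n h_s(s_n) = N^n s_n^s = \exp\left( n\ln N + s\ln s_n \right)$. Whether this tends to $0$ or $\infty$ (along a subsequence, or for all large $n$) is governed by the sign of $n\ln N + s\ln s_n$, equivalently by whether $s$ is larger or smaller than $-n\ln N/\ln s_n$ (note $\ln s_n \to -\infty$ so these ratios are eventually positive). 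Set $\alpha_H = \liminf_n \frac{-n\ln N}{\ln s_n}$ and $\alpha_P = \limsup_n \frac{-n\ln N}{\ln s_n}$.

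For the Hausdorff dimension: if $s < \alpha_H$, then for all large $n$ we have $s < \frac{-n\ln N}{\ln s_n}$, hence $n\ln N + s\ln s_n > 0$ for all large $n$, in fact bounded below by a positive multiple of $n$ (after shrinking $s$ slightly and using the definition of $\liminf$), so $N^n h_s(s_n) \to \infty$; thus $\liminf N^n h_s(s_n) = \infty > 0$ and part~2 of Theorem~\ref{thm:measureestimates} (via Remark~\ref{rmk:s_n}) gives $\cH^{h_s}(C_{s,\cD}) > 0$, whence $\dim_H(C_{s,\cD}) \ge s$. Letting $s \uparrow \alpha_H$ yields $\dim_H \ge \alpha_H$. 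Conversely, if $s > \alpha_H$, then along a subsequence $n_j$ we have $s > \frac{-n_j\ln N}{\ln s_{n_j}}$, so $n_j \ln N + s \ln s_{n_j} < 0$; again after enlarging $s$ infinitesimally this is bounded above by a negative multiple of $n_j$, forcing $N^{n_j} h_s(s_{n_j}) \to 0$, so $\liminf N^n h_s(s_n) = 0$ and part~1 gives $\cH^{h_s}(C_{s,\cD}) \le 0$, i.e.\ $=0$, so $\dim_H(C_{s,\cD}) \le s$. Letting $s \downarrow \alpha_H$ gives $\dim_H \le \alpha_H$.

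The packing case is dual: if $s > \alpha_P$ then for all large $n$, $n\ln N + s\ln s_n < 0$ (bounded above by a negative multiple of $n$), so $N^n h_s(s_n) \to 0$ and $\limsup N^n h_s(s_n) = 0 < \infty$; part~3 gives $\cP^{h_s}(C_{s,\cD}) \le N\cdot 0 = 0$, so $\dim_P \le s$, and letting $s \downarrow \alpha_P$ gives $\dim_P \le \alpha_P$. If $s < \alpha_P$ then along a subsequence $n\ln N + s\ln s_n \to +\infty$, so $\limsup N^n h_s(s_n) = \infty > 0$ and part~4 gives $\cP^{h_s}(C_{s,\cD}) > 0$, so $\dim_P \ge s$; letting $s \uparrow \alpha_P$ gives $\dim_P \ge \alpha_P$. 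Combining the four inequalities yields the two stated formulas.

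I do not expect any serious obstacle here: the only delicate point is the routine bookkeeping in converting ``$s < \liminf$'' (resp.\ ``$s > \limsup$'') into the exponential estimates $N^n s_n^s \to \infty$ (resp.\ $\to 0$), which requires choosing an intermediate exponent strictly between $s$ and the relevant $\liminf$/$\limsup$ value so that the linear-in-$n$ term dominates; this is standard. One should also note in passing that if $\ln s_n / n$ does not stay bounded away from $0$ the ratios could be unbounded, making $\alpha_P = \infty$, in which case $\dim_P = +\infty$ is still consistent with the packing dimension taking the value $p$ at most---but since the theorem only asserts the formula, and $C_{s,\cD} \subset \R^p$ forces the dimensions to be $\le p$ automatically, the formula should be read with that cap implicit, or one checks that condition~(\ref{eq:kappacondition}) in fact prevents $s_n$ from decaying too slowly. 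This is the only point worth a sentence of care in the write-up.
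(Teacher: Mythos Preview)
Your proposal is correct and follows essentially the same route as the paper: apply Theorem~\ref{thm:measureestimates} to the power functions $h_s(x)=x^s$, compare $s$ with the $\liminf$/$\limsup$ of $-n\ln N/\ln s_n$, and read off the Hausdorff and packing dimensions. The only cosmetic differences are that the paper first records the equality $\liminf\frac{-n\ln N}{\ln(\kappa R_n)}=\liminf\frac{-n\ln N}{\ln s_n}$ directly (rather than citing Remark~\ref{rmk:s_n}) and is content with ``$\liminf N^n(\kappa R_n)^\beta<1$'' (finite measure) for the upper bound where you push to $\liminf=0$; both are fine.
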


\begin{proof}
First, we note that
\[
    \liminf \frac{ -n \ln(N)}{\ln( \kappa R_n)}  = \liminf \frac{ - n \ln(N)}{\ln( R_n)} = \liminf \frac{ -n \ln(N)}{\ln( s_n)},
\]
with a similar equality for the limit superior.

If $\beta > \alpha := \liminf \frac{-n \ln(N)}{\ln( \kappa R_n)}$, then there is a subsequence $(n_j)$ so that $N^{n_j} (\kappa R_{n_j})^{\beta} < 1$. Thus
$\liminf N^n (\kappa R_n)^{\beta} < 1$ and so $\dim_H(C_{s,\cD}) \le \alpha$ by Theorem \ref{thm:measureestimates}.

Conversely, if $\gamma < \alpha$, then for large $n$ we have $N^n (R_n)^{\gamma} > 1$ and thus
$\liminf N^n (R_n)^{\gamma} > 1$ and so $\dim_H(C_{s,\cD}) \ge \alpha$ by Theorem \ref{thm:measureestimates}.

The proof for packing dimension is similar.
\end{proof}

\begin{example} \label{example:alldimensions}
For any $\alpha \in [0,p)$, it is possible to construct a sum set, $C_{s,\cD} \subset \R^p$, with $\dim_H(C_{s,\cD}) = \alpha$.
The simplest way of doing this is to choose $\cD^n = \{ (\epsilon_1, \epsilon_2, \ldots, \epsilon_p) : \epsilon_i \in \{0,1 \} \}$, the set of all corners of a $p$-dimensional unit cube, and set
$s_n = \lambda^n$ where $\lambda = 2^{-p/\alpha}$.  This will generate a self-similar set, $C_{s,\cD}$, that is a product of classical Cantor sets.
The problem is that condition (\ref{eq:kappacondition}) requires that $\lambda < 1/(1+\sqrt{p})$, which does not allow the full range of dimensions (and, in fact, gets worse as $p$ increases).
However, from the simple geometry of this example, we can see that the sets $C_{\sigma,n}$ are non-overlapping provided
$s_n > R_n$. Under this (weaker) assumption,  $C_{s,\cD}$ is a self-similar set satisfying the open set condition and hence its dimensions are as stated in the previous proposition. This separation condition allows for any $\lambda \in [0,1/2)$.

\end{example}

In the case of the ``usual'' dimension functions $h_s$, Theorem \ref{thm:comparingHausdorff} has a stronger form in that not only
is there a Cantor subset with the correct dimension but this subset corresponds to all the subsums of a subsequence of $(s_n)$.

\begin{theorem} \label{thm:Hausdorffdimension}
Suppose that $\dim_H(C_{s,\cD}) = A$ and $dim_P(C_{s,\cD}) = B$.  Then for any $0 \le \alpha \le A$ and $0 \le  \beta \le B$, with $\alpha/A \le \beta/B$, there is a subsequence
$(t_n)$ of $(s_n)$ such that $\dim_H(C_{t,\cD}) = \alpha$ and $\dim_P(C_{t,\cD}) = \beta$.
\end{theorem}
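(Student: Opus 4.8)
The plan is to reduce the statement to a combinatorial question about the sequence $(s_n)$, and then to build the desired subsequence by a ``density-steering'' construction.

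First I would record that, for any subsequence $(t_n)=(s_{k_n})$ of $(s_n)$ paired with the digit sets $\cD^{k_n}$, the set $C_{t,\cD}$ is a subset of $C_{s,\cD}$ (pad with $0\in\cD^i$ at the omitted indices) which again satisfies~(\ref{eq:kappacondition}): for $j>n$ we have $k_j>k_n$, so $\sum_{j>n}t_j\le R_{k_n}$, while $t_n=s_{k_n}$ and $\kappa_{k_j}\le\kappa$, $\tau_{k_j}\ge\tau$, whence $\sup_n\kappa(\sum_{j>n}t_j)/(\tau t_n)\le\sup_n\kappa R_{k_n}/(\tau s_{k_n})\le M<1$. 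Thus Proposition~\ref{dimensions} applies to $C_{t,\cD}$. Writing $\ell_m:=-\ln(s_m)/\ln N$, which by Remark~\ref{rmk:s_n} is strictly increasing and tends to $\infty$, Proposition~\ref{dimensions} gives $A=\liminf_m m/\ell_m$, $B=\limsup_m m/\ell_m$ and $\dim_H(C_{t,\cD})=\liminf_n n/\ell_{k_n}$, $\dim_P(C_{t,\cD})=\limsup_n n/\ell_{k_n}$. So (noting that $\alpha/A\le\beta/B$ and $A\le B$ force $\alpha\le\beta$) the problem becomes: find $k_1<k_2<\cdots$ with $\liminf_n n/\ell_{k_n}=\alpha$ and $\limsup_n n/\ell_{k_n}=\beta$.

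For the construction, put $\rho_2:=\alpha/A$ and $\rho_1:=\beta/B$, so $0\le\rho_2\le\rho_1\le1$. I would pick two infinite index sets in advance: the \emph{$A$-landmarks}, along which $m/\ell_m\to A$, and the \emph{$B$-landmarks}, along which $m/\ell_m\to B$ (these exist since $A$, $B$ are the relevant $\liminf$, $\limsup$). Then build $(k_n)$ by repeating a four-stage cycle, monitoring the running density $d_m:=\#\{j:k_j\le m\}/m$, with a tolerance that shrinks to $0$ over successive cycles: (i) \emph{descend}: skip consecutive indices until $d_m\approx\rho_2$; (ii) \emph{cruise low}: include indices so as to hold $d_m\approx\rho_2$, while being sure to include the next $A$-landmark; (iii) \emph{ascend}: include consecutive indices until $d_m\approx\rho_1$; (iv) \emph{cruise high}: hold $d_m\approx\rho_1$ while being sure to include the next $B$-landmark. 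Since $0\le\rho_2\le\rho_1\le1$, stages (i) and (iii) — which skip, resp.\ include, $\Theta(m)$ consecutive indices — always complete, and since the landmark sets are infinite the cruises terminate; throughout the process $d_m$ stays within a vanishing slack of $[\rho_2,\rho_1]$.

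To verify the dimension identities: for every included index $m=k_n$, eventually
\[
   n/\ell_{k_n}=d_m\,(m/\ell_m)\in\bigl[(\rho_2-o(1))(A-o(1)),\ (\rho_1+o(1))(B+o(1))\bigr].
\]
Letting the tolerances and the index tend appropriately, and using $\rho_2A=\alpha$, $\rho_1B=\beta$ and — crucially — $\rho_2B\le\beta$, $\rho_1A\ge\alpha$ (each equivalent to $\alpha/A\le\beta/B$), this yields $\liminf_n n/\ell_{k_n}\ge\alpha$ and $\limsup_n n/\ell_{k_n}\le\beta$. Conversely, along the included $A$-landmarks $d_m\to\rho_2$ and $m/\ell_m\to A$, so $n/\ell_{k_n}\to\rho_2A=\alpha$, giving $\liminf_n n/\ell_{k_n}\le\alpha$; along the included $B$-landmarks $n/\ell_{k_n}\to\rho_1B=\beta$, giving $\limsup_n n/\ell_{k_n}\ge\beta$. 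Hence $\dim_H(C_{t,\cD})=\alpha$ and $\dim_P(C_{t,\cD})=\beta$ by Proposition~\ref{dimensions}. The degenerate cases — $B=0$ (then $\alpha=\beta=0$), $A=0$ (which forces $\alpha=0$), $\rho_2=0$, or $\rho_1=1$ — are handled by the same scheme with obvious simplifications. The hard part will be the bookkeeping concealed in the four-stage cycle: checking that the density can be driven from $\rho_2$ up to $\rho_1$ and back at the necessary rate while still landing on the prescribed landmarks, and that the $o(1)$ errors from executing these moves at finite indices do not corrupt the limits — and it is precisely there that the hypothesis $\alpha/A\le\beta/B$ does its work.
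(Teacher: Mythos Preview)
Your proposal is correct and follows essentially the same approach as the paper's proof. Both arguments reduce to the combinatorial problem of selecting a subsequence whose cumulative retention density $d_m=\#\{j:k_j\le m\}/m$ oscillates between $\alpha/A$ and $\beta/B$, arranged so that the density is near $\alpha/A$ at indices where $m/\ell_m\to A$ and near $\beta/B$ at indices where $m/\ell_m\to B$; the factorisation $n/\ell_{k_n}=d_m\cdot(m/\ell_m)$ then gives the desired liminf and limsup. The paper achieves the density control by uniformly thinning with alternating densities $1-\alpha/A$ and $1-\beta/B$ on segments between landmarks, using exponential spacing $m_i\ge 2^{n_i}$, $n_{i+1}\ge 2^{m_i}$ to force the cumulative density to be dominated by the most recent segment; your four-stage cycle (descend/cruise low/ascend/cruise high) accomplishes the same thing with explicit transition phases, and your verification that the subsequence still satisfies~(\ref{eq:kappacondition}) is more careful than the paper's one-line remark. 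One small point: the inequalities $\rho_2 B\le\beta$ and $\rho_1 A\ge\alpha$ that you flag as ``crucial'' are not actually needed for the bounds $\liminf\ge\alpha$ and $\limsup\le\beta$ --- those follow already from $d_m\in[\rho_2,\rho_1]$ and $m/\ell_m\in[A,B]$ up to $o(1)$, giving the product in $[\rho_2 A,\rho_1 B]=[\alpha,\beta]$; what is genuinely crucial is simply $\rho_2\le\rho_1$, so that the cycle makes sense.
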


\begin{proof}
We will assume $0< \alpha <A$, $0 < \beta < B$ and leave the details of the endpoint cases for the reader. Choose $n_i$ and $m_i$ to be disjoint sequences of indices such that
\[
    \lim_i   \frac{- n_i \ln(N)}{\ln( s_{n_i})} = A   \quad \mbox{ and } \quad  \lim_i \frac{-m_i \ln(N)}{\ln( s_{m_i} )} = B.
\]
If necessary, we take subsequences in order to assure that  $n_1 \ge 100$, $m_i \ge 2^{n_i}$, and $n_{i+1} \ge 2^{m_i}$.
To obtain the new sequence $t_k$, we remove terms from $s_n$ in segments, each in a ``uniform'' manner with some density $\xi \in (0,1)$.
To explain, suppose the segment is the set of indices $\{q , q+1, \ldots, \ell \} \subset \N$.
Then to uniformly remove terms with density $\xi$ from this segment, we remove all the terms of the form
$q + \lfloor i/\xi \rfloor$ for $i = 0,\ldots, \lfloor \xi (\ell - q) - 1 \rfloor$ (to make sure we do not remove $\ell$).
Note that removing with density $\xi$ is the same as retaining with density $1-\xi$.

From the set of indices $\{1,2,\ldots, n_1 \}$, we remove  terms in a ``uniform'' way with density $1-\frac{\alpha}{A}$.
Then from the set of indices $\{n_1+1, \ldots, m_1 \}$ we remove terms in a ``uniform'' way with density $1 - \frac{\beta}{B}$.
We continue alternating, removing terms with density $1-\frac{\alpha}{A}$ from $\{m_i+1, \ldots, n_{i+1}\}$ and with
density $1 - \frac{\beta}{B}$ from $\{ n_i + 1, \ldots, m_i \}$.
Call the resulting sequence $t_\ell$ where we have $t_ \ell= s_{n}$,
with $\ell = n \Theta(n)$ where $\Theta:\N \to [\alpha/A,\beta/B]$ is a measure of the ``local scaling'' of  the index.
From the construction we have $\Theta(n_j) \approx \alpha/A$, $\Theta(m_j) \approx \beta/B$, $\Theta$ is increasing on $\{ n_i+1,\ldots, m_i\}$ and
decreasing on $\{ m_i+1, \ldots, n_{i+1} \}$.
Further,
\[
    \frac{-\ell \ln(N)}{ \ln( t_\ell )}  = \theta(n) \frac{-n \ln(N)}{\ln( s_n)}.
\]
From here it is straightforward to show that $\liminf \frac{-\ell \ln(N)}{\ln( t_\ell )} = \alpha$ and  also that $\limsup \frac{-\ell \ln(N)}{\ln( t_\ell )} = \beta$, as desired.
The condition $\alpha/A \le \beta/B$ is used to check the new liminf and limsup.  
Since the original sequence satisfies condition (\ref{eq:kappacondition}), it is easy to see that any subsequence will as well.
\end{proof}

Of course, this construction does not guarantee that $C_{t,\cD}$ will satisfy $0 < \cH^\alpha(C_{t,\cD}) < \infty$ even if it has the proper dimension.
Comparing Theorem \ref{thm:comparingHausdorff} with Theorem \ref{thm:Hausdorffdimension}, we trade the ability to specify the
$\cH^t$-measure of the subset with the ability to ensure that the subset is of a particularly nice form, in Theorem
\ref{thm:Hausdorffdimension} being the full set of subsums of some subsequence.  
However, if we assume a bit more on $C_{t,\cD}$ we can obtain a substantially stronger result.

\begin{theorem} \label{thm:alphasets} \mbox{}

\begin{enumerate}

  \item Suppose that $0 < \cH^A(C_{s,\cD}) < \infty$.  Then for any $0 \le a \le A$ there is a subsequence $(t_n)$ of $(s_n)$ such that $0 < \cH^a(C_{t,\cD})< \infty$.

  \item Suppose that $0 < \cP^B(C_{s,\cD}) < \infty$.  Then for any $0 \le b \le B$ there is a subsequence $(t_n)$ of $(s_n)$ such that $0 < \cP^b(C_{t,\cD})< \infty$.

  \item Suppose that $0 < \cH^A(C_{s,\cD}) < \infty$ and $0 < \cP^B(C_{s,\cD}) < \infty$.  
Then for any $0 \le a \le A$ and $0 \le b \le B$ with $a/A \le b/B$, there is a subsequence $(t_n)$ of $(s_n)$ such that 
$0 < \cH^a(C_{t,\cD}) < \infty$ and $0 < \cP^b(C_{t,\cD}) < \infty$.

\end{enumerate}
\end{theorem}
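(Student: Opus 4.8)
The plan is to recast the two measure hypotheses as growth conditions on $(N^{n}s_{n}^{A})$ and $(N^{n}s_{n}^{B})$, and then to produce $(t_{n})$ by a carefully controlled version of the uniform removal construction from the proof of Theorem~\ref{thm:Hausdorffdimension}. I assume $0<a<A$ and $0<b<B$, the endpoint cases being trivial or vacuous.

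\emph{Reduction.} With $h_{c}(x)=x^{c}$ (a doubling dimension function), Theorem~\ref{thm:measureestimates} and Remark~\ref{rmk:s_n} give that $0<\liminf_{n}N^{n}s_{n}^{A}<\infty$ implies $0<\cH^{A}(C_{s,\cD})<\infty$ and that $0<\limsup_{n}N^{n}s_{n}^{B}<\infty$ implies $0<\cP^{B}(C_{s,\cD})<\infty$, and likewise for any subsequence in place of $(s_{n})$ and $(a,b)$ in place of $(A,B)$. I would prove the converses by the density form of the mass distribution principle, in the spirit of the proof of Theorem~\ref{thm:measureestimates}: if $\liminf_{n}N^{n}s_{n}^{A}=0$ the cylinder covering already forces $\cH^{A}(C_{s,\cD})=0$; if $\liminf_{n}N^{n}s_{n}^{A}=\infty$ then, using $|C_{\sigma,n}|\le\kappa R_{n}$ and $s_{n}\le R_{n-1}\le(1+M\tau/\kappa)s_{n}$ together with the bounded-overlap estimate, the natural measure $\mu$ has $\mu(B(x,\delta))/\delta^{A}\to 0$ for all $x\in C_{s,\cD}$, so $\cH^{A}(C_{s,\cD})=\infty$; and the $\liminf$-density statements of Theorem~3.16 in \cite{C95} do the same for $\cP^{B}$. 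Hence the hypotheses are exactly $0<\liminf_{n}N^{n}s_{n}^{A}<\infty$ and $0<\limsup_{n}N^{n}s_{n}^{B}<\infty$. Writing $t_{k}=s_{m_{k}}$ and letting $\ell$ be the counting function of $S=\{m_{k}\}$, and setting
\[
g(m)=\frac{a}{\ln N}\ln\frac{1}{s_{m}},\qquad G(m)=\frac{b}{a}\,g(m),
\]
one has $N^{k}t_{k}^{a}=N^{\ell(m_{k})-g(m_{k})}$ and $N^{k}t_{k}^{b}=N^{\ell(m_{k})-G(m_{k})}$; here $g,G$ are non-decreasing, $g\le G$, $G-g\to\infty$, the hypotheses read $g(n)\le\frac{a}{A}n+O(1)$ for all $n$ with equality up to $O(1)$ along an infinite set $P$, and $G(n)\ge\frac{b}{B}n-O(1)$ for all $n$ with equality up to $O(1)$ along an infinite set $Q$. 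So it suffices to choose $S$ with $0<\liminf_{k}N^{k}t_{k}^{a}<\infty$ (part 1), or $0<\limsup_{k}N^{k}t_{k}^{b}<\infty$ (part 2), or both (part 3); the conclusion then follows from the reduction, and since any subsequence of $(s_{n})$ again satisfies \eqref{eq:kappacondition}, $C_{t,\cD}$ is a genuine generalized sum set.

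\emph{Parts 1 and 2, and part 3 when $a/A=b/B$.} For part 1, remove terms uniformly with density $1-a/A$ as in Theorem~\ref{thm:Hausdorffdimension}, with the harmless local adjustment that guarantees a sparse subsequence of $P$ is retained. Then $\ell(m)=\frac{a}{A}m+O(1)$, so $\ell(m)-g(m)\ge -O(1)$ at every retained $m$ while $\ell(p)-g(p)=O(1)$ for the retained $p\in P$; hence $0<\liminf_{k}N^{k}t_{k}^{a}<\infty$. Part 2 is the mirror image (density $1-b/B$, $\limsup$, $Q$ in place of $P$). When $a/A=b/B$ the single density $\rho=a/A=b/B$ does both jobs at once and thus also settles part 3 in that case. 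For $a/A<b/B$ no constant density works (density $>a/A$ forces $\cH^{a}=\infty$, density $\le a/A$ forces $\cP^{b}=0$), so I would instead let $\ell$ oscillate within the widening corridor $g(m)-O(1)\le\ell(m)\le G(m)+O(1)$, meeting its lower edge near points of $P$ and its upper edge near points of $Q$ infinitely often: on a rapidly growing sequence of index blocks, alternate a low-density segment (rate $a/A$, carrying $\ell$ down onto the $g$-track and through the next $P$-point) with a high-density segment (rate $b/B$, carrying $\ell$ onto the $G$-track and through the next $Q$-point), joined by transitional segments of full retention (to climb) and of zero retention (to let $g$ rise up to $\ell$). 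Compatibility of the two edges needs $a\le A$ (so $g(m)\le m+O(1)$) and $a/A\le b/B$ (so $G(m)\ge\frac{a}{A}m-O(1)$). Once $\ell$ is built, $\liminf_{k}N^{k}t_{k}^{a}=N^{\liminf_{m\in S}(\ell(m)-g(m))}\in(0,\infty)$ and $\limsup_{k}N^{k}t_{k}^{b}=N^{\limsup_{m\in S}(\ell(m)-G(m))}\in(0,\infty)$, as required.

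\emph{Main obstacle.} The hard part is this last construction: maintaining $g(m)-O(1)\le\ell(m)\le G(m)+O(1)$ at every retained index while still touching both edges infinitely often. Since $s_{n}$ may drop arbitrarily fast, $g$ and $G$ can have unbounded increments, and a large upward jump of $g$ while $\ell$ is on the $g$-track would drive $\ell$ permanently below $g-O(1)$. The argument is saved by two facts: (i) right after a point of $P$ the sequence $\lambda_{m}:=\ln(1/s_{m})$ sits near the top of its admissible range, so a jump of $g$ of a given size can occur only a number of steps past $P$ proportional to that size, leaving $\ell$ time to climb back toward $G$; and (ii) a single step cannot change $g$ by more than the current corridor width $G-g$, which is precisely the inequality $\lambda_{m+1}/\lambda_{m}\lesssim b/a$, i.e.\ the hypothesis $a/A\le b/B$. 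This is the measure-level counterpart of the role of $\alpha/A\le\beta/B$ in Theorem~\ref{thm:Hausdorffdimension}.
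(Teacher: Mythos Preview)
Your reduction to the conditions $0<\liminf N^{n}s_{n}^{A}<\infty$ and $0<\limsup N^{n}s_{n}^{B}<\infty$ is exactly what the paper uses (implicitly), and your treatment of parts 1, 2, and the case $a/A=b/B$ of part 3 via a single uniform density coincides with the paper's: there $\pi(i)=\lfloor (A/a)i\rfloor$ with a minor perturbation so that each $n_{j},m_{j}$ lies in the range of $\pi$.

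Where you diverge is the case $a/A<b/B$. You try to keep the counting function $\ell$ inside the curved corridor $g(m)-O(1)\le\ell(m)\le G(m)+O(1)$, which forces you to worry about large one-step jumps of $g$ and to introduce zero-retention segments; your resolution via (i) and (ii) is plausible but not carried through (in particular, a zero-retention stretch can let $g$ overshoot $\ell$ by much more than one corridor width, so the single-step bound (ii) does not by itself close the gap). The paper sidesteps this entirely by observing that, since $g(m)\le (a/A)m+O(1)$ and $G(m)\ge (b/B)m-O(1)$, it suffices to keep $\ell$ in the \emph{linear} corridor $(a/A)m\le\ell(m)\le (b/B)m$, or equivalently $B/b\le\pi(i)/i\le A/a$. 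That condition is purely combinatorial and independent of the values $s_{n}$, so no jump control is needed. The paper then writes down an explicit piecewise-affine $\pi$: full retention on short windows around the indices $n_{j}$ and $m_{j}$ (so that $\pi$ hits $n_{j}$ at $i=Q_{j}\approx(a/A)n_{j}$ and $m_{j}$ at $i=P_{j}\approx(b/B)m_{j}$), joined by two sparsified segments of carefully chosen slopes $d_{j},e_{j}$; the inequality $a/A<b/B$ is used only to verify $B/b\le\pi(i)/i\le A/a$ on those joining segments. If you replace your curved corridor by this linear one, your obstacle disappears and your block construction can be made precise along the same lines.
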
 

\begin{proof}
We prove the third statement as it is the most involved.  The other two are similar.  
As in Theorem \ref{thm:Hausdorffdimension} we work with $s_n$ rather than $R_n$.  

Let $m_i, n_i \in \N$ be such that $\lim N^{m_j} s_{m_j}^B = \limsup N^n s_n^B = S$ and $\lim N^{n_j} s_{n_j}^A = \liminf N^n s_n^A = I$.  
In addition, we assume that $n_j < m_j < n_{j+1} < m_{j+1}$, $m_j/n_j \to \infty$, and  $n_{j+1}/m_j \to \infty$.  The two cases $a/A = b/B$ and
$a/A < b/B$ require different techniques and so we do them separately.

\smallskip

\noindent{\emph{Case 1: $a/A = b/B$}}

If $a/A = 1$, then there is nothing to prove. 
We define our subsequence $(t_n)$ by defining the indexing function $\pi:\N \to \N$ such that $t_n = s_{\pi(n)}$.
Define $\hat{\pi}:\N \to \N$ by $\hat{\pi}(i) = \lfloor (A/a) i \rfloor$. 
 If $m_j, n_j \in \hat{\pi}(\N)$ for all $j$, then we  let $\pi = \hat{\pi}$.
Otherwise, suppose that $m_j \notin \hat{\pi}(\N)$.  
Then $i := \lfloor m_j a/A \rfloor < m_j a/A$, so we define $\pi(i) = m_j$.
We do the same procedure for any $n_j \notin \hat{\pi}(\N)$. 
Since $A/a > 1$ we know that $\hat{\pi}$ is injective.  
If we assume that $|n_j - m_k| > 2 A/a$ for all $j$ and $k$ then $\pi$ is also guaranteed to be injective.
Since $[k,k+A/a+1] \cap \hat{\pi}(\N)$ is nonempty for any $k$, we know that $-1 \le \pi(i) - (A/a) i \le A/a + 1$ or, more useful for us,
\[
      -1 - \frac{a}{A} + \pi(i) \left( \frac{a}{A} \right) \le i \le \frac{a}{A} + \pi(i) \left( \frac{a}{A} \right).
\]
This means that
\begin{eqnarray*}
    N^i t_i^a &=& N^i s_{\pi(i)}^a \ge N^{-1 - a/A} N^{\pi(i) (a/A)} s_{\pi(i)}^{(a/A) A} = N^{-1 - a/A} \left( N^{\pi(i)} s_{\pi(i)}^A \right)^{a/A}\cr
      &\ge& N^{-1 - a/A} (I-\epsilon)^{a/A} > 0, 
\end{eqnarray*}
for large enough $i$.  Thus $\liminf N^i t_i^a > 0$ and so $\cH^a(C_{t,\cD}) >0$.  By construction, there is a sequence $q_j \in \N$ so that $\pi(q_j) = n_j$ and so
\[
   N^{q_j} t_{q_j}^a  \le N^{a/A} N^{\pi(q_j) (a/A)} s_{n_j}^{(a/A) A} = N^{a/A} \left( N^{n_j} s_{n_j}^A \right)^{a/A} \le N^{a/A} (I + \epsilon)^{a/A} < \infty.
\]
Thus $\cH^a(C_{t,\cD}) < \infty$ as well.  The proof that $0 < \cP^b(C_{t,\cD}) < \infty$ is similar.

\smallskip

\noindent{\emph{Case 2: $a/A < b/B$}}

Let 
\[
   \gamma_0 = \frac{ \frac{B}{b} - 1}{\frac{A}{a} - 1}
\]
and then choose $\delta > 0$ so that $\gamma := \gamma_0 + \delta < 1$.  Define 
\[
     n_j' = \left\lfloor \frac{n_j}{\gamma} \right\rfloor  \quad \mbox{ and } \quad  m_j' = \lfloor \gamma m_j \rfloor
\]
and notice that $n_j' > n_j$ and $m_j' < m_j$.  For notational ease, let
\begin{eqnarray*}
    P_j = m_j - \left\lceil (1-\frac{b}{B}) m_j \right\rceil,  &  \quad &
    P_j' = m_j' - \left\lceil (1-\frac{b}{B}) m_j' \right\rceil, \\[3 pt]
    Q_j = n_j - \left\lfloor(1-\frac{a}{A}) n_j \right\rfloor,  &\mbox{ and }& 
    Q_j' = n_j' - \left\lfloor (1-\frac{a}{A}) n_j' \right\rfloor. \cr
\end{eqnarray*}
Further, let
\[
   d_j = \frac{ P_j' + \lceil (1-\frac{b}{B}) m_j \rceil - (Q_j' + \lfloor (1-\frac{a}{A}) n_j \rfloor )}{ P_j' - Q_j'}
\]
and
\[
   e_j = \frac{ Q_{j+1} + \lfloor (1-\frac{a}{A}) n_{j+1} \rfloor - (P_j + \lceil (1-\frac{b}{B}) m_j \rceil )}{ Q_{j+1} - P_j}.
\]
Define $\pi:\N \to \N$ by
\[
\pi(i)  = 
\begin{cases}
    i + \lfloor (1-\frac{a}{A}) n_j \rfloor & \mbox{ if } Q_j \le i < Q_j' \\
   i + \lceil (1- \frac{b}{B}) m_j \rceil & \mbox{ if } P_j' < i \le P_j \\
   Q_j' + \lfloor (1-\frac{a}{A} ) n_j \rfloor + \lfloor k d_j \rfloor & \mbox{ if } i = Q_j' + k, k = 0,\ldots, P_j' - Q_j' \\
   P_j + \lceil (1-\frac{b}{B}) m_j \rceil + \lceil k e_j \rceil & \mbox{ if } i = P_j + k, k = 1, \ldots, Q_{j+1} - P_j - 1.
\end{cases}
\]
We define $t_i = s_{\pi(i)}$.  The choice of $n_j, m_j$, and  $\gamma$ ensure that $Q_j < Q_j' < P_j' < P_j < Q_{j+1}$.  
We also have $\pi(Q_j) = n_j$ and $\pi(P_j) = m_j$.
It is straightforward but quite tedious to check that $\pi$ is injective and also that 
\begin{equation} \label{eq:ratioestimate}
   \frac{B}{b} \le \frac{\pi(i)}{i} \le \frac{A}{a}
\end{equation} 
for all large $i$.  
We remark that the strict inequality $a/A < b/B$ is necessary in order to show (\ref{eq:ratioestimate}) for the last two cases in the definition of $\pi$.

Thus for $\epsilon > 0$ small and all large enough $i$, we have
\[
    N^i t_i^a = N^i s_{\pi(i)}^a \ge N^{\pi(i) (a/A)} s_{\pi(i)}^{(a/A) A} = \left( N^{\pi(i)} s_{\pi(i)}^A \right)^{a/A} \ge (I-\epsilon)^{a/A} > 0,
\]
and thus $\cH^a(C_{t,\cD}) > 0$. 
For $i = Q_j$, we have $\pi(i) = n_j$ and $Q_j \le (a/A) n_j + 1$ and so
\[
   N^i t_i^a = N^{Q_j} s_{n_j}^a \le N^{(a/A) n_j}  s_{n_j}^a N = N \left( N^{n_j} s_{n_j}^A \right)^{a/A} \le N (I + \epsilon) ^{a/A} < \infty
\]
and so $\cH^a(C_{t,\cD}) < \infty$.  The argument that $0 < \cP^b(C_{t,\cD}) < \infty$ is similar.
\end{proof}

\begin{example}
The simple Example \ref{example:Cantor} will show that in general we cannot find a subsequence which will give a subset of arbitrary measure.
Recall that it was $s_n = 2/3^n$ and $\cD^n = \{0, 1 \}$ for all $n$. 
It is known that $\dim_H C_{s,\cD} = d = \ln(2)/\ln(3)$ and $\cH^d(C_{s,\cD}) = 1$.
The key observation  is that if $t_n$ is a subsequence of $s_n$ constructed by removing only $K$ terms from $s_n$, then $\cH^d(C_{t,\cD}) = 2^{-K}$,
since $C_{s,\cD}$ is the union of $2^K$ disjoint copies of $C_{t,\cD}$ (these copies correspond to the possible subsums of the removed terms).
But this means that it is impossible to find a subsequence $t_n$ with $\cH^d(C_{t,\cD}) = 1/3$.
\end{example}

\end{document}